\newtheorem{remark}{Remark}%
\newtheorem{theorem}{Theorem}%
\newtheorem{definition}{Definition}%
\newtheorem{corollary}{Corollary}%
\newtheorem{proposition}{Proposition}%
\title{Topological indices in Random Spiro Chains}
\author{Saylé Sigarreta\thanks{Saylé Sigarreta Author. Email: sayle.sigarretar@alumno.buap.mx}, Saylí Sigarreta and Hugo Cruz-Suárez.}
\date{April 2022}
\begin{document}

\maketitle
\textbf{Abstract:} Let $G=(V(G),E(G))$ denote a graph, many important topological indices can be defined as \begin{center}
    $TI(G)= \sum_{v\in V(G)} h(d_{v})^{a}$,
\end{center} or \begin{center}
    $TI(G)= \sum_{vu\in E(G)} f(d_{v},d_{u})^{a}$.
\end{center} 
In this paper, we study these kinds of topological indices in random spiro chains via a martingale approach. In which their explicit analytical expressions of the exact distribution, expected value and variance are obtained. As $n$ goes to $\infty$, the asymptotic normality of topological indices of a random spiro chain is established through
the Martingale Central Limit Theorem. In particular, we compute the Nirmala, Sombor, Randić and Zagreb index for a random spiro chain along with their comparative analysis.

\section{Introduction}\label{sec1}

\noindent
A graph $G$ is determined by two sets $(V(G),E(G))$, the set of nodes ($V(G)$) and edges ($E(G)$). The edges and nodes are interpreted according to the problem to be modeled. In particular, a molecular graph is a simple graph such that its vertices correspond to the atoms and
the edges to the bonds of a molecule, where a simple graph is a graph without directed, weighted or multiple
edges, and without self-loops. Topological indices numerically quantify aspects of these graphs for multiple purposes, such as sparseness, regularity, and centrality. In addition, the first  and second Zagreb indices appeared for the first time in \cite{5}, then it is defined in \cite{6} the Randi\'c index. These indices are mostly historical and well-known indices which have been widely
used to predict the properties of compounds; since have been proved to have a wide range of functions as topological variables supported by chemical experiment data. In \cite{18}, a novel topological index was introduced via a geometric approach, named Sombor index defined as 

\hfill  
\medskip

\begin{center}
$S O(G)=\displaystyle\sum_{u v \in E(G)} \sqrt{\left(d_{u}\right)^{2}+\left(d_{v}\right)^{2}}$,
\end{center}

\hfill  
\medskip

\noindent
where $d_{v}$ is the degree of a vertex $v$. Nowadays, several graph invariants related to the Sombor index have been presented. For example, in \cite{19}, Kulli introduced the Nirmala index of a graph $G$ as follows
\medskip

\begin{equation}\label{nir}
     N(G)=\displaystyle \sum _{u v \in E(G)} \sqrt{d_{u}+d_{v}}.
\end{equation}

\medskip

\noindent
Recent work on the Nirmala index can be consulted in \cite{199}, \cite{20} and \cite{21}. In general, topological indices that can be constructed for static and random graphs represent a major part of the current research in mathematical chemistry and chemical graph theory.
\medskip

\noindent
On the other hand, the martingale theory is a very powerful and deep mathematical tool. The concept was
introduced by Paul Lévy in 1934, and was given its name in 1939 by J. André Ville. The
development of a whole theory around martingales is due to Joseph L. Doob. Nowadays, the concept of martingales is well-known. In particular,  there are martingale central limit theorems, which give conditions under which the whole process is approximately normally distributed. Actually, in \cite{22} and \cite{23} the authors used a martingale approach to study topological indices, such as the Zagreb, Gordon-Scantlebury and Platt indices.
\hfill
\medskip

\noindent
 In \cite{n1}, \cite{n2}, \cite{n3}, \cite{n4}, \cite{n6}, \cite{n5}, \cite{n7}, \cite{n8}  and \cite{n9}  the authors studied topological indices in random chains. In particular, spiro compounds are an important class of 
cycloalkanes in organic chemistry. The derivatives of spiros are fairly often seen chemicals, which may be utilized in organic synthesis, drug synthesis,
heat exchanger, etc. Motivated for the above information, we make researches on topological indices in random spiro 
chains. In this paper, our goal is to associate a martingale to the topological index, so that, the properties that can be deduced from the martingale are useful to show those of the topological index. We  first establish exact formulas for the expected value, variance and the exact distribution of topological indices in random spiro chains. Moreover, we find a general result for the asymptotic distribution via this approach. Finally, as applications, using the Nirmala, Sombor, Randić and Zagreb index, the results are given for random spiro chains (see in Definition \ref{d1}). 

\begin{definition}\label{d1}
The random spiro chain $RSC_{n}=RSC\left(n, p_{1}, p_{2}, p_{3}\right)$ with $n$ hexagons is constructured by the following way:
\begin{itemize}
    \item $RSC_{1}$ is a hexagon and $RSC_{2}$ contains two hexagons, see Figure \ref{f7}.
    \item For every $n>2, RSC_{n}$ is constructured by attaching one hexagon to $RSC_{n-1}$ in three ways, resulted in $RSC_{n}^{1}, R S C_{n}^{2}, R S C_{n}^{3}$ with probability $p_{1}, p_{2}$ and $p_{3}$ respectively, where $0  < p_{i}  < 1$ and $p_{1}+p_{2}+p_{3}=1$, see Figure \ref{f8}.
\end{itemize}

\end{definition} 

\begin{figure}[h!]

   \centering
    \includegraphics[width=0.5\textwidth]{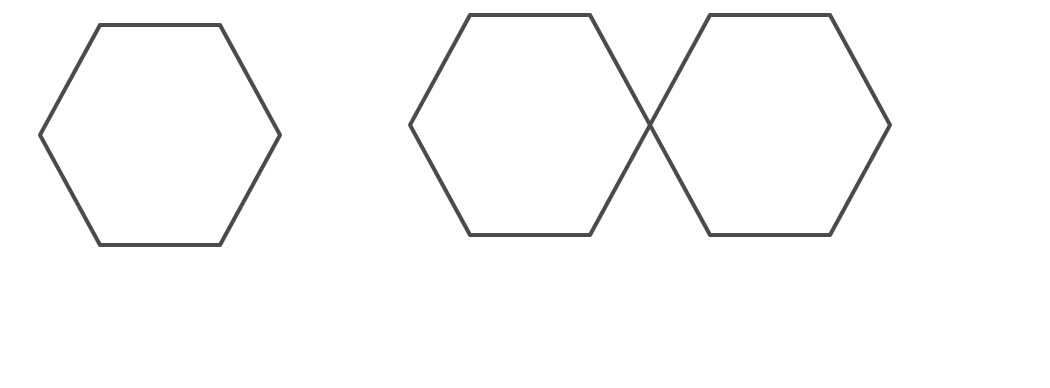}
    \caption{The graphs of $RSC_{1}$ and $RSC_{2}$.}

    \label{f7}
\end{figure}

\begin{figure}[h!]
    \centering
    \includegraphics[width=0.9\textwidth]{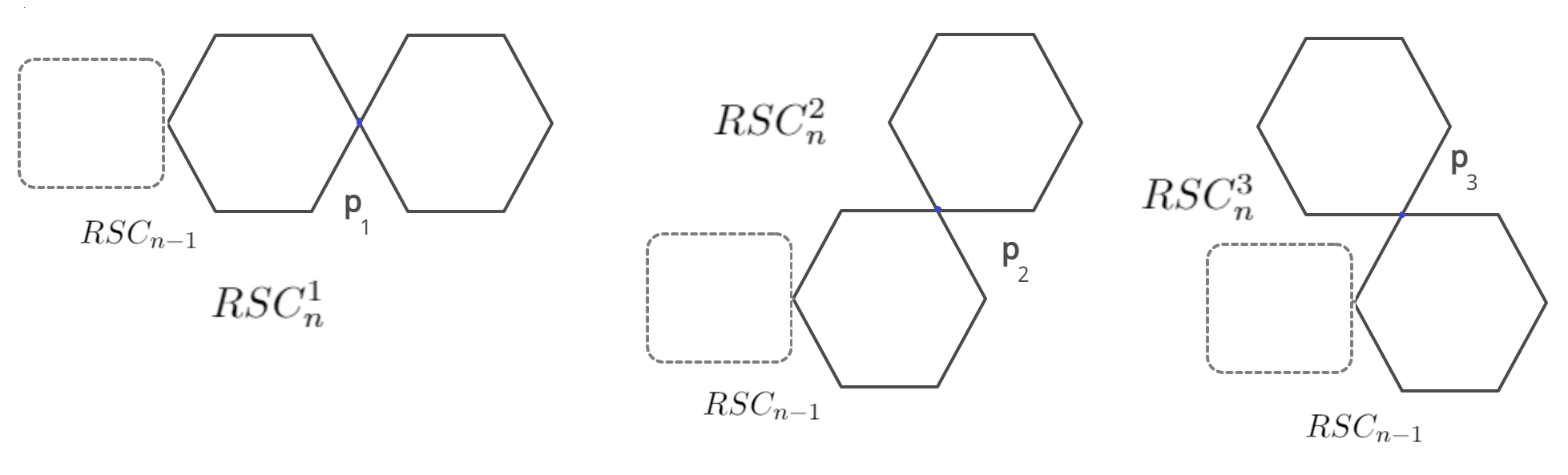}
   \caption{The three link ways for $RSC_{n} (n > 2)$.}
    \label{f8}
\end{figure}
\section{Topological indices in random spiro chains.}\label{sect}
\noindent
Let $G=(V(G),E(G))$, many important topological indices can be defined as
\begin{equation}\label{sa1}
   TI(G)= \displaystyle\sum_{v\in V(G)} h(d_{v})^{a},
\end{equation}
or
\begin{equation}\label{sa2}
   TI(G)= \displaystyle\sum_{vu\in E(G)} f(d_{v},d_{u})^{a},
\end{equation}
    
\noindent
where $a\in \mathbb{R} $, $h:\{1,2, \dots \} \rightarrow (0, \infty)$ and $f:\{1,2, \dots \} \times \{1,2, \dots \} \rightarrow (0, \infty)$ is any symmetric function. The main topological indices of the form (\ref{sa1}) and (\ref{sa2}) are:
\begin{itemize}
    \item If $ h(t)=t$ and $a=2$ then $TI(G)$ is the first Zagreb index.
    \item If $ h(t)=t$ and $a=-1$ then $TI(G)$ is the inverse degree index.
    \item If $ h(t)=t$ and $a=3$ then $TI(G)$ is  the forgotten index.
     \item If $ h(t)=t$ and $a \in \mathbb{R}$ then $TI(G)$ is the variable first Zagreb index.
     \item If $ f(x,y)=xy$ and $a=1$ then $TI(G)$ is the second Zagreb  index.
     \item If $f(x,y)=xy$ and $a=-1/2$ then $TI(G)$ is the usual Randić index.
     \item If $f(x,y)=x+y$ and $a=-1/2$ then $TI(G)$ is the sum-connectivity index.
     \item If $f(x,y)=x+y$ and $a=-1$ then $2TI(G)$ is the harmonic index.
     \item If $f(x,y)=x+y$ and $a\in\mathbb{R}$ then $TI(G)$ is the variable sum-connectivity index.

\end{itemize}

\begin{remark}
Note that the Nirmala index (\ref{nir}) is the reverse version of the sum-connectivity index. In addition, the Nirmala index is the variable sum-connectivity index, for $a = 1 / 2$.
\end{remark}

\begin{theorem}\label{t1}
Let $RSC_{n}=RSC\left(n, p_{1},  p_{2}, p_{3}\right)$ with $n \geq 2$ be a random spiro chain. Then \hfill
\medskip
\begin{equation*}
    \mathbb{E}(TI_{n})=TI_{2}+\alpha(n-2),
\end{equation*}
\begin{equation*}
V(TI_{n})=(\beta-\alpha^{2})(n-2),
\end{equation*}
\medskip

\noindent
where  $i=1,2,3$, $TI_{n}=TI(RSC_{n})$, $TI_{n,i}=TI(RSC_{n}^{i})$, $\alpha_{i}=TI_{3,i}-TI_{2}$, $\alpha=\displaystyle\sum _{i=1}^{3}\alpha_{i}p_{i}$ and $\beta=\displaystyle\sum _{i=1}^{3} \alpha_{i}^{2}p_{i}$.
\end{theorem}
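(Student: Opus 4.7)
The plan is to exploit the fact that adjoining a new hexagon to a random spiro chain modifies the degree of exactly one pre-existing vertex (the new spiro atom, whose degree jumps from $2$ to $4$), so that the increment $TI_{n}-TI_{n-1}$ depends only on the link way chosen at step $n$. Once this is established, the expectation and variance follow from an i.i.d.\ sum, and a natural martingale appears with zero-mean stationary increments.

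First I would verify by induction on $n$ the structural fact that in $RSC_{n-1}$ (for $n\geq 3$) the last hexagon $H_{n-1}$ has exactly one vertex of degree $4$, namely the spiro vertex shared with $H_{n-2}$, while its five other vertices all have degree $2$. This guarantees that when $H_{n}$ is attached at a vertex $v$ of $H_{n-1}$, the degrees of all vertices relevant to the change in $TI$ are determined solely by the position of $v$ relative to that unique degree-$4$ vertex, i.e.\ by the link way $X_{n}\in\{1,2,3\}$.

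Next, let $\Delta_{n}:=TI_{n}-TI_{n-1}$. Attaching $H_{n}$ introduces five new vertices of degree $2$ and six new edges, and modifies only the contributions of the two edges of $H_{n-1}$ incident to $v$, whose value changes because $d_{v}$ is updated from $2$ to $4$. The local picture around $v$ in $RSC_{n}$ is isomorphic, as a degree-labelled graph, to the corresponding picture in $RSC_{3}^{X_{n}}$, so
\begin{equation*}
\Delta_{n}\;=\;TI_{3,X_{n}}-TI_{2}\;=\;\alpha_{X_{n}}.
\end{equation*}
Since $X_{3},X_{4},\ldots$ are i.i.d.\ with $\mathbb{P}(X_{k}=i)=p_{i}$, the increments $\Delta_{3},\ldots,\Delta_{n}$ are i.i.d.\ with $\mathbb{E}(\Delta_{k})=\alpha$ and $\mathbb{E}(\Delta_{k}^{2})=\beta$, and summing $TI_{n}=TI_{2}+\sum_{k=3}^{n}\Delta_{k}$ yields $\mathbb{E}(TI_{n})=TI_{2}+\alpha(n-2)$ and $V(TI_{n})=(n-2)(\beta-\alpha^{2})$. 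Equivalently, $M_{n}:=TI_{n}-TI_{2}-\alpha(n-2)$ is a mean-zero martingale with respect to $\mathcal{F}_{n}=\sigma(X_{3},\ldots,X_{n})$, with stationary increments of variance $\beta-\alpha^{2}$; this is the natural footing for the Martingale Central Limit Theorem announced in the introduction.

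The only non-routine step is the local-structure identification $\Delta_{n}=\alpha_{X_{n}}$. I would handle it carefully by exhibiting an explicit degree-preserving isomorphism between the relevant neighbourhoods of $v$ in $RSC_{n}$ and the matching neighbourhood in $RSC_{3}^{X_{n}}$, so that every summand of $TI$ that changes under the attachment agrees on both sides. After that identification, the remainder of the proof is bookkeeping plus the standard i.i.d./martingale calculations for mean and variance.
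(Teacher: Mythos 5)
Your proposal is correct, and its core step is exactly the paper's: the almost-sure identity $TI_{n}-TI_{n-1}=TI_{3,L_{n}}-TI_{2}=\alpha_{L_{n}}$, justified by the fact that attaching a hexagon only raises one existing degree from $2$ to $4$ and that the affected local picture is degree-isomorphic to the one in $RSC_{3}^{i}$ versus $RSC_{2}$. Where you diverge is in how you extract the moments from this identity: you invoke the independence of the link choices $L_{3},L_{4},\dots$ to write $TI_{n}=TI_{2}+\sum_{k=3}^{n}\alpha_{L_{k}}$ as an i.i.d.\ sum and read off mean and variance directly, whereas the paper only uses the conditional distribution of the increment given $\mathbb{F}_{n-1}$, derives recurrences for $\mathbb{E}(TI_{n})$ and $\mathbb{E}(TI_{n}^{2})$, and solves them. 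Both are valid here (independence does hold by Definition \ref{d1}); the paper's conditional-recurrence route is the one that generalizes to settings where increments are only conditionally specified and it feeds naturally into the martingale $M_{n}=TI_{n}-\alpha(n-2)$ used later, while your i.i.d.-sum viewpoint is more economical and in fact already contains the exact-distribution result of Theorem \ref{t3} (the sum $\sum_{k}\alpha_{L_{k}}$ is precisely $a^{T}X$ with $X$ multinomial). One small slip: your $M_{n}=TI_{n}-TI_{2}-\alpha(n-2)$ differs from the paper's martingale by the constant $TI_{2}$, which is harmless but worth noting for consistency with Proposition \ref{l1}.
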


\begin{proof}
Let $n \geq 3$ and $L_{n}$ denote a random variable with range
$\{1,2,3\}$ and let $p_{i}= \mathbb{P}(L_{n} = i)$ and $L_{2}$ denote the initial link, i.e., $L_{n}$ denote  the  link  selected  at  time $n$. Note that, at time $n-1$ we have

\hfill
\medskip

\begin{center}
    $\underbrace{\underbrace{H L_{2} H}_{{RSC_{2}}} L_{3}  H L_{4} H L_{5} \dots L_{n-1}H}_{{RSC_{n-1}}}.$
\end{center}
\hfill
\medskip

\noindent
Then, at time $n$, we obtain 
\hfill
\medskip

\begin{center}
    $\underbrace{\underbrace{\underbrace{H L_{2} H}_{{RSC_{2}}} L_{3}  H L_{4} H L_{5} \dots L_{n-1}H}_{{RSC_{n-1}}} L_{n} H}_{{RSC_{n}}}.$
\end{center}
\hfill
\medskip

\noindent
 Therefore, we must pay attention to the change in the calculation of the topological index by joining $H$ with $H$ via $L_{n}$. Let $n \geq 3$ and $i=1,2,3$, then based on this approach, by the definition of a random spiro chain and $TI(G)$ in Equation (\ref{sa1}) and (\ref{sa2}), we obtain the following almost-sure recursive relation between $TI_{n-1}$ and $TI_{n}$, conditional on the event that the at time $n$ the link $i$ is selected and $\mathbb{F}_{n-1}$
 
 \hfill
\medskip
\begin{equation*}
     TI_{n,i}-TI_{n-1}=TI_{3,i}-TI_{2},
 \end{equation*}

\hfill
\medskip

\noindent
where $\mathbb{F}_{n-1}$ denotes the $\sigma$-field generated by the history of the growth of the random spiro chain in the first $n-1$ stages. Now, we take the expectation with respect to $L_{n}$ to get,

\begin{eqnarray*}
  \mathbb{E}(TI_{n} \mid \mathbb{F}_{n-1})&=&\displaystyle \sum _{i=1}^{3}(TI_{n-1}+\alpha_{i})p_{i}\\
    &=&TI_{n-1}+\sum _{i=1}^{3}\alpha_{i}p_{i},\\
    \end{eqnarray*}

\noindent
where, $\alpha_{i}=TI_{3,i}-TI_{2}$. Then, taking expectation, we obtain a recurrence relationship for $\mathbb{E}(TI_{n})$,

\begin{equation}\label{e2}
    \mathbb{E}(TI_{n})=\mathbb{E}(TI_{n-1})+\sum _{i=1}^{3}\alpha_{i}p_{i}.
\end{equation}

\medskip

\noindent
We solve Equation $(\ref{e2})$ with the initial value $\mathbb{E}(TI_{2})=TI_{2}$ and we obtain the result stated in the theorem,
\hfill
\medskip
\begin{equation*}
    \mathbb{E}(TI_{n})=TI_{2}+\alpha(n-2),
\end{equation*}

\medskip

\noindent
where $\alpha=\displaystyle\sum _{i=1}^{3}\alpha_{i}p_{i}$. The expressions for $\mathbb{E}(TI^{2}_{n})$ follow in a similar manner,
\medskip
\newline

\begin{eqnarray*}
 \mathbb{E}(TI^{2}_{n} \mid \mathbb{F}_{n-1})&=&\displaystyle \sum _{i=1}^{3} (TI_{n-1}+\alpha_{i})^{2}p_{i}\\
  &=&\displaystyle \sum _{i=1}^{3} TI^{2}_{n-1}p_{i}+2TI_{n-1}\alpha_{i}p_{i}+\alpha_{i}^{2}p_{i}\\
   &=&TI^{2}_{n-1}+2TI_{n-1}\alpha+\beta,\\
\end{eqnarray*}

\noindent
where $\beta=\displaystyle\sum _{i=1}^{3} \alpha_{i}^{2}p_{i}$, thus
\medskip
\newline

\begin{eqnarray*}
\mathbb{E}(TI^{2}_{n})&=&\mathbb{E}(TI^{2}_{n-1})+2\alpha\mathbb{E}(TI_{n-1})+\beta\\
  &=&\mathbb{E}(TI^{2}_{n-1})+2\alpha TI_{2}+2\alpha^{2}(n-3)+\beta,
\end{eqnarray*}

\noindent
with $\mathbb{E}(TI^{2}_{2})=TI^{2}_{2}$, then iterating, it is obtained that 

\medskip
\begin{center}
    $\mathbb{E}(TI^{2}_{n})=TI^{2}_{2}+(2\alpha TI_{2}+\beta)(n-2)+(n-3)(n-2)\alpha^{2}.$
\end{center}
\medskip

\noindent
The variance of $TI_{n}$ is obtained immediately by taking the difference between $\mathbb{E}(TI^{2}_{n})$ and $\mathbb{E}(TI_{n})^{2}$,

\medskip

\begin{eqnarray*}
V(TI_{n})&=&\beta(n-2)+ \left((n-2)(n-3)-(n-2)^{2}\right)\alpha^{2}\\
       &=&(\beta-\alpha^{2})(n-2),\\
\end{eqnarray*}

\noindent
proving the theorem.
\end{proof}

\hfill
\medskip

\noindent
Note that $\beta - \alpha^
{2}=0$ if and only if $\alpha_{1}=\alpha_{2}=\alpha_{3}$ if and onl if $TI_{n}=TI_{2}+\alpha(n-2)$ a.s. with $n\geq 2$ (a deterministic sequence). Now, we exploit a martingale formulation to investigate the asymptotic behavior 
of $TI_{n}$ when $\beta - \alpha^
{2}>0$. The key idea is to consider a transformation $M_{n}$ and we require that the transformed
random variables form a martingale in the next proposition.

\begin{proposition}\label{l1}
For $n \geq 2$, $\{M_{n}=TI_{n}-\alpha(n-2)\}_{n}$ is a martingale with respect to $\mathbb{F}_{n}$.
\end{proposition}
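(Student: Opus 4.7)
The plan is to verify the three standard defining properties of a martingale for the sequence $\{M_n\}_{n\ge 2}$ with respect to the filtration $\mathbb{F}_n$: adaptedness, integrability, and the one-step conditional expectation identity $\mathbb{E}(M_n\mid\mathbb{F}_{n-1})=M_{n-1}$. Adaptedness is immediate because $TI_n$ is a deterministic function of the link choices $L_2,\dots,L_n$ (which generate $\mathbb{F}_n$), and the deterministic shift $\alpha(n-2)$ does not affect measurability. Integrability is also easy: the number of edges of $RSC_n$ and the maximum degree are bounded by fixed constants times $n$, so $TI_n$ is a bounded random variable for each fixed $n$, hence $\mathbb{E}|M_n|<\infty$.

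The substantive step is the martingale identity. Here I would reuse the almost-sure one-step recursion established in the proof of Theorem \ref{t1}, namely that conditional on $\mathbb{F}_{n-1}$ and on the event $\{L_n=i\}$, one has $TI_n=TI_{n-1}+\alpha_i$ with $\alpha_i=TI_{3,i}-TI_2$. Since $L_n$ is independent of $\mathbb{F}_{n-1}$ with $\mathbb{P}(L_n=i)=p_i$, taking conditional expectation yields
\begin{equation*}
\mathbb{E}(TI_n\mid\mathbb{F}_{n-1})=\sum_{i=1}^{3}(TI_{n-1}+\alpha_i)p_i=TI_{n-1}+\alpha.
\end{equation*}
Subtracting the deterministic quantity $\alpha(n-2)$ from both sides gives
\begin{equation*}
\mathbb{E}(M_n\mid\mathbb{F}_{n-1})=TI_{n-1}+\alpha-\alpha(n-2)=TI_{n-1}-\alpha(n-3)=M_{n-1},
\end{equation*}
which is exactly the martingale property.

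I do not expect any real obstacle in this proof; the work was already absorbed into the derivation of $\mathbb{E}(TI_n)$ in Theorem \ref{t1}. The only thing one must be slightly careful about is the indexing of the shift: the factor $(n-2)$ (rather than $n$ or $n-1$) is chosen precisely so that the telescoping of $\alpha$ cancels the increment contributed by the new link, and this is what I would highlight to make the centering transparent.
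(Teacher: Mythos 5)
Your proof is correct and follows essentially the same route as the paper: both rely on the one-step recursion $\mathbb{E}(TI_{n}\mid\mathbb{F}_{n-1})=TI_{n-1}+\alpha$ from Theorem \ref{t1} and subtract the deterministic centering $\alpha(n-2)$ to obtain $M_{n-1}$. Your explicit justification of adaptedness and integrability (via boundedness of $TI_n$ for fixed $n$) is a small addition the paper leaves implicit, but the substance is identical.
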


\begin{proof} Firstly, observe that $\mathbb{E}(|M_{n}|)< +\infty$. Then, by Theorem \ref{t1},
$$
\begin{aligned}
\mathbb{E}\left(TI_{n}-\alpha(n-2) \mid \mathbb{F}_{n-1}\right) &= \mathbb{E}\left(TI_{n} \mid \mathbb{F}_{n-1}\right)-\alpha(n-2) \\
&= TI_{n-1}+ \alpha-\alpha(n-2) \\
&=TI_{n-1}-\alpha(n-3).
\end{aligned}
$$
The proof is completed.

\end{proof}
\hfill
\medskip

\noindent
We use the notation $\stackrel{D}{\longrightarrow}$ to denote convergence in distribution and $\stackrel{P}{\longrightarrow}$ to denote convergence in probability. The random variable $\mathrm{N}\left(\mu, \sigma^{2}\right)$ appears in the following theorem for the normal distributed with mean $\mu$ and variance $\sigma^{2}$.

\begin{theorem}\label{t2}
As $n \rightarrow \infty$,
\hfill
\medskip

\begin{center}
    $\frac{TI_{n}-(n-2) \alpha}{\sqrt{n}} \stackrel{D}{\longrightarrow}N(0,\beta - \alpha^
{2})$.
\end{center}
\end{theorem}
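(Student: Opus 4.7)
The plan is to apply the Martingale Central Limit Theorem (MCLT) to the martingale $\{M_{n}\}_n$ constructed in Proposition \ref{l1}. Rewriting the statement as
\begin{equation*}
\frac{TI_{n} - (n-2)\alpha}{\sqrt{n}} = \frac{M_{n}}{\sqrt{n}},
\end{equation*}
it suffices to show that $M_{n}/\sqrt{n} \stackrel{D}{\longrightarrow} N(0, \beta - \alpha^{2})$.

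First, I would decompose $M_{n}$ as a telescoping sum of its martingale differences $X_{k} = M_{k} - M_{k-1} = TI_{k} - TI_{k-1} - \alpha$ for $k \geq 3$, so that $M_{n} = TI_{2} + \sum_{k=3}^{n} X_{k}$. The almost-sure recursion $TI_{k} - TI_{k-1} = \alpha_{L_{k}}$ obtained in the proof of Theorem \ref{t1}, combined with $\mathbb{P}(L_{k}=i)=p_{i}$ and the independence of $L_{k}$ from $\mathbb{F}_{k-1}$, gives the conditional variance
\begin{equation*}
\mathbb{E}(X_{k}^{2} \mid \mathbb{F}_{k-1}) = \sum_{i=1}^{3}(\alpha_{i} - \alpha)^{2} p_{i} = \beta - \alpha^{2},
\end{equation*}
which is deterministic and independent of $k$. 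Consequently the normalized predictable quadratic variation converges:
\begin{equation*}
\frac{1}{n}\sum_{k=3}^{n} \mathbb{E}(X_{k}^{2} \mid \mathbb{F}_{k-1}) = \frac{(n-2)(\beta-\alpha^{2})}{n} \longrightarrow \beta - \alpha^{2}.
\end{equation*}

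Next, I would verify the conditional Lindeberg condition. Since each increment $X_{k}$ takes values in the finite set $\{\alpha_{i} - \alpha : i=1,2,3\}$, there is a universal constant $C = \max_{i}|\alpha_{i} - \alpha|$ such that $|X_{k}| \leq C$ for every $k$. For any $\varepsilon > 0$, once $n > (C/\varepsilon)^{2}$ the indicator $\mathbf{1}\{|X_{k}|>\varepsilon\sqrt{n}\}$ vanishes identically, so the Lindeberg sum
\begin{equation*}
\frac{1}{n}\sum_{k=3}^{n} \mathbb{E}\bigl(X_{k}^{2}\,\mathbf{1}\{|X_{k}|>\varepsilon\sqrt{n}\} \bigm| \mathbb{F}_{k-1}\bigr)
\end{equation*}
is identically zero for all sufficiently large $n$.

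With both hypotheses in hand, the MCLT delivers $(M_{n}-M_{2})/\sqrt{n} \stackrel{D}{\longrightarrow} N(0, \beta - \alpha^{2})$, and since $M_{2} = TI_{2}$ is a fixed constant, $M_{2}/\sqrt{n}\to 0$, so Slutsky's lemma yields the same limit for $M_{n}/\sqrt{n}$. I do not anticipate a real obstacle: the standard difficulties in applying the MCLT, namely controlling the tails of the increments and establishing convergence of the predictable variation, both collapse here because the increments are uniformly bounded and their conditional variance is already a constant. The only item requiring care is bookkeeping the initial term $M_{2}$ and the starting index $k=3$, which is absorbed by Slutsky's lemma.
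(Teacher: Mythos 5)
Your proposal is correct and follows essentially the same route as the paper: both verify the conditional Lindeberg condition via the uniform boundedness of the martingale increments, compute the normalized predictable quadratic variation as $\frac{n-2}{n}\sum_{i=1}^{3}(\alpha_i-\alpha)^2 p_i \to \beta-\alpha^2$, and invoke the Martingale Central Limit Theorem. Your explicit handling of the initial term $M_2=TI_2$ via Slutsky's lemma is a small point of added care that the paper leaves implicit, but it does not change the argument.
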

\begin{proof}
 Note that, for $j \geq 3$ and $i=1,2,3$, we have
\begin{center}
    $|\nabla M_{j}|=|\nabla TI_{j}-\alpha|\leq 2~ \underset{i}{max} \{|\alpha_{i}|\}$,
\end{center}
\hfill
\medskip

\noindent
where $\nabla M_{j}=M_{j}-M_{j-1}$ and $\nabla TI_{j}=TI_{j}-TI_{j-1}$. Then, as  $n$ goes to $\infty$
\begin{center}
    $\displaystyle\lim _{n \rightarrow \infty} \frac{\left|\nabla M_{j}\right|}{\sqrt{n}}=0$.
\end{center}

\noindent
That is, given $\varepsilon>0$, there exists an $n_{0}(\varepsilon)>0$ such that, the sets $\left\{ |\nabla M_{j}|  > \varepsilon\sqrt{n} \right\}$ are empty for all $n>n_{0}(\varepsilon)$. In what follows, we conclude that 
\begin{center}
    $U_{n}:=\frac{1}{n}\displaystyle\sum_{j=3}^{n} \mathbb{E}\left(\left(\nabla M_{j}\right)^{2} \mathbb{I}_{\left\{ |\nabla M_{j}|  > \varepsilon\sqrt{n} \right\}} \mid \mathbb{F}_{j-1}\right),$
\end{center} 
converges to 0 almost surely, hence, $U_{n} \stackrel{P}{\longrightarrow}0$. As a result, the Lindeberg’s condition is verified. Next, the conditional variance condition is given by

$$
V_{n}:=\frac{1}{n}\sum_{j=3}^{n} \mathbb{E}\left(\left(\nabla M_{j}\right)^{2} \mid \mathbb{F}_{j-1}\right) \stackrel{P}{\longrightarrow} \beta - \alpha^
{2}.
$$
\noindent
Note that,

\begin{center}
    $$
\begin{aligned}
\frac{1}{n}\sum_{j=3}^{n} \mathbb{E}\left(\left(\nabla M_{j}\right)^{2} \mid \mathbb{F}_{j-1}\right)=& \frac{1}{n} \sum_{j=3}^{n} \mathbb{E}\left((\nabla TI_{j}-\alpha)^{2}\mid \mathbb{F}_{j-1}\right) \\
=& \frac{1}{n} \sum_{j=3}^{n}\sum_{i=1}^{3}(\alpha_{i}-\alpha)^{2}p_{i}\\
=& \frac{n-2}{n}\sum_{i=1}^{3}(\alpha_{i}-\alpha)^{2}p_{i}.
\end{aligned}
$$
\end{center}

\noindent
By the Martingale Central Limit Theorem \cite{26}, we thus obtain the stated result, since $$
\begin{aligned}
\sum_{i=1}^{3}(\alpha_{i}-\alpha)^{2}p_{i}=&\sum_{i=1}^{3} \alpha_{i}^{2} p_{i}-2\alpha \sum_{i=1}^{3} \alpha_{i} p_{i}+\alpha^{2}\\
=& \beta-\alpha^{2}.
\end{aligned}
$$

\end{proof}

\noindent
Then we may use Theorem \ref{t2} to find the following result.

\begin{corollary}\label{c0}
As $n \rightarrow \infty$,
\hfill
\medskip

\begin{center}
    $\frac{TI_{n}-\mathbb{E}(TI_{n})}{\sqrt{V(TI_{n})}} \stackrel{D}{\longrightarrow}N(0,1)$.
\end{center}

\end{corollary}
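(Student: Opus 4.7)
The plan is to reduce the statement to Theorem \ref{t2} by an algebraic rewriting of the centered-and-normalized random variable, together with Slutsky's theorem. Using the explicit formulas from Theorem \ref{t1}, namely $\mathbb{E}(TI_n)=TI_2+\alpha(n-2)$ and $V(TI_n)=(\beta-\alpha^{2})(n-2)$, we write
$$
\frac{TI_n-\mathbb{E}(TI_n)}{\sqrt{V(TI_n)}}
=\frac{TI_n-(n-2)\alpha}{\sqrt{n}}\cdot\sqrt{\frac{n}{(\beta-\alpha^{2})(n-2)}}-\frac{TI_2}{\sqrt{(\beta-\alpha^{2})(n-2)}}.
$$
Since we are in the regime $\beta-\alpha^{2}>0$ (otherwise the corollary is vacuous, as the remark following Theorem \ref{t1} shows that $TI_n$ is deterministic), every quantity above is well defined.

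Next, I would handle each piece. By Theorem \ref{t2}, the first factor $\frac{TI_n-(n-2)\alpha}{\sqrt{n}}$ converges in distribution to $N(0,\beta-\alpha^{2})$. The deterministic factor $\sqrt{n/\bigl((\beta-\alpha^{2})(n-2)\bigr)}$ converges to $1/\sqrt{\beta-\alpha^{2}}$, and the final term $TI_2/\sqrt{(\beta-\alpha^{2})(n-2)}$ is deterministic and converges to $0$. Applying Slutsky's theorem then yields convergence in distribution to
$$
\frac{1}{\sqrt{\beta-\alpha^{2}}}\,N(0,\beta-\alpha^{2})-0 \;=\; N(0,1),
$$
which is the desired conclusion.

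There is essentially no real obstacle: all the hard probabilistic work, namely the martingale construction in Proposition \ref{l1}, verification of the Lindeberg condition, and the conditional-variance convergence, has already been carried out in the proof of Theorem \ref{t2}. The only thing worth flagging is the standing assumption $\beta-\alpha^{2}>0$, which must be stated explicitly so that the normalization by $\sqrt{V(TI_n)}$ is legitimate; under that assumption the corollary follows in a few lines from Theorem \ref{t1} and Theorem \ref{t2} via Slutsky's theorem.
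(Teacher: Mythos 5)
Your proposal is correct and matches the paper's intended argument: the paper simply states that the corollary follows from Theorem \ref{t2}, and your rewriting via the formulas of Theorem \ref{t1} plus Slutsky's theorem is exactly the standard way to fill in that step. Your explicit flagging of the standing assumption $\beta-\alpha^{2}>0$ is consistent with the paper's own remark preceding Proposition \ref{l1}.
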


\noindent
The following theorem gives further details on the distribution for topological indices in random spiro chains. Here, $M_{R}(\cdot)$ denotes the moment generating function of a random variable $R$.

\hfill
\medskip

\noindent

\begin{theorem}\label{t3}
Let $RSC_{n}$ with $n \geq 2$ be a random spiro chain. Then,
\hfill
\medskip
\begin{center}
$TI_{n}=TI_{2}+a^{T}X$,
\end{center}
\hfill
\medskip

\noindent
where $a^{T}=(\alpha_{1},\alpha_{2},\alpha_{3})$ and $X=(X_{1},X_{2},X_{3})$ is a multinomial random variable  with parameters $n-2$ and $(p_{1},p_{2},p_{3})$.
\end{theorem}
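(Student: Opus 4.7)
The plan is to derive the representation by telescoping the one-step increments of $TI_n$ and then recognizing that the accumulated counts of each link type form a multinomial vector. From the proof of Theorem \ref{t1}, for each $j \geq 3$ we already have the almost-sure increment identity $TI_{j} - TI_{j-1} = \alpha_{L_j}$, where $L_j \in \{1,2,3\}$ is the link selected at stage $j$ and $\alpha_i = TI_{3,i}-TI_2$. The first step is simply to sum this identity over $j = 3, \ldots, n$ and cancel in telescoping fashion to obtain
\begin{equation*}
TI_n = TI_2 + \sum_{j=3}^{n} \alpha_{L_j}.
\end{equation*}

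The second step is to rewrite this sum in terms of occupancy counts $X_i := \#\{\, 3 \leq j \leq n : L_j = i\,\}$ for $i=1,2,3$. Since each increment equals exactly one of $\alpha_1, \alpha_2, \alpha_3$, we may regroup the sum as $\sum_{j=3}^{n} \alpha_{L_j} = \sum_{i=1}^{3} \alpha_i X_i = a^{T} X$, giving the desired additive form $TI_n = TI_2 + a^T X$ with $a^T = (\alpha_1,\alpha_2,\alpha_3)$. Finally, because the link selections $L_3, L_4, \ldots, L_n$ are, by Definition \ref{d1}, independent across stages with $\mathbb{P}(L_j = i) = p_i$, the joint distribution of $(X_1, X_2, X_3)$ is multinomial with parameters $n-2$ and $(p_1, p_2, p_3)$, completing the argument.

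No step here is a serious obstacle; the only subtlety worth emphasizing is the independence of the $L_j$ across stages, which is implicit in the construction of $RSC_n$ from $RSC_{n-1}$ but must be stated explicitly before invoking the multinomial characterization. As a direct by-product, the moment generating function alluded to in the theorem's preamble can then be read off from the standard MGF of a multinomial vector, yielding
\begin{equation*}
M_{TI_n}(t) = e^{t\,TI_2}\bigl(p_1 e^{t\alpha_1} + p_2 e^{t\alpha_2} + p_3 e^{t\alpha_3}\bigr)^{n-2},
\end{equation*}
although this closed form is not strictly needed for the statement being proved.
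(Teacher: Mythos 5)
Your proof is correct, but it takes a genuinely different route from the paper. The paper works entirely at the level of moment generating functions: it computes $\mathbb{E}(e^{tTI_{n}} \mid \mathbb{F}_{n-1}) = e^{tTI_{n-1}}\sum_{i=1}^{3} e^{t\alpha_{i}}p_{i}$, iterates to get $M_{TI_{n}}(t) = M_{TI_{2}}(t)\left(\sum_{i=1}^{3} e^{t\alpha_{i}}p_{i}\right)^{n-2}$, and then recognizes the second factor as $M_{a^{T}X}(t)$ for $X$ multinomial. You instead telescope the pathwise increment identity $TI_{j}-TI_{j-1}=\alpha_{L_{j}}$ and regroup by occupancy counts. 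The two approaches buy slightly different things: the MGF argument only ever establishes equality \emph{in distribution} between $TI_{n}$ and $TI_{2}+a^{T}X$, whereas your telescoping argument produces the identity as an almost-sure equality of random variables on the same probability space (with $X_{i}$ the actual count of times link $i$ was chosen), which is the stronger and arguably more natural reading of the statement $TI_{n}=TI_{2}+a^{T}X$. The price you pay is that you must explicitly invoke the independence of $L_{3},\dots,L_{n}$ across stages to conclude that $(X_{1},X_{2},X_{3})$ is multinomial --- you correctly flag this; the paper's conditional-expectation computation uses the same fact implicitly when it asserts that the conditional distribution of $L_{n}$ given $\mathbb{F}_{n-1}$ is $(p_{1},p_{2},p_{3})$. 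Your closing remark that the product form of $M_{TI_{n}}(t)$ falls out as a by-product is exactly the content of the paper's proof read in reverse.
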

\begin{proof} Let $t \in \mathbb{R}$, note that,
 $$
\begin{aligned}
\mathbb{E}\left(e^{tTI_{n}} \mid \mathbb{F}_{n-1}\right)=&\sum_{i=1}^{3} e^{tTI_{n-1}}  e^{t \alpha_{i}} p_{i}\\
= &~e^{tTI_{n-1}}\sum_{i=1}^{3}  e^{t \alpha_{i}} p_{i}.\\
\end{aligned}
$$
\noindent
Thus, we can conclude that

\begin{center}
    $M_{TI_{n}}(t)=M_{TI_{n-1}}(t)\displaystyle\sum_{i=1}^{3}  e^{t \alpha_{i}} p_{i}.$
\end{center}
\hfill
\medskip

\noindent
 We may therefore write, 
\hfill
\medskip

\noindent
$$
\begin{aligned}
M_{TI_{n}}(t)=& M_{TI_{2}}(t)\left(\displaystyle\sum_{i=1}^{3}  e^{t \alpha_{i}} p_{i}\right)^{n-2} \\
=&M_{TI_{2}}(t)M_{X}(\alpha_{1}t,\alpha_{2}t, \alpha_{3}t)\\
=&M_{TI_{2}}(t)M_{a^{T}X}(t),
\end{aligned}
$$
\noindent
which completes the proof.
\end{proof}

\noindent
It is useful to note that the approximation given in Corollary \ref{c0} is identical to the one
obtained by the following method. Let $n\geq 2$, by Theorem \ref{t3} we have that
\hfill
\medskip
\begin{center}
$TI_{n}=TI_{2}+a^{T}X$.
\end{center}
\hfill
\medskip

\noindent
 It follows from the Central Limit Theorem to the case of random vectors \cite{27} that $X$ is asymptotically distributed according to a multivariate normal distribution with mean $\mathbb{E}(X)$
and covariance matrix $V(X)$. Consequently, $a^{T}X$ is asymptotically distributed according to a normal distribution with mean $a^{T}\mathbb{E}(X)$
and variance $a^{T}V(X)a$. Then, as $n$ goes to $\infty$
\hfill
\medskip
\begin{center}
$\frac{TI_{n}-TI_{2}-a^{T}\mathbb{E}(X)}{\sqrt{a^{T}V(X)a}}=\frac{TI_{n}-\mathbb{E}(TI_{n})}{\sqrt{V(TI_{n})}}\xrightarrow{D} N(0,1)$.
\end{center}
 \hfill
\medskip

\noindent
\section{ Interpretation of the results and examples.}
\noindent
The conclusion of Section \ref{sect} can be stated as follows.
\begin{theorem}\label{galle}
Let $R S C_{n}$ with $n \geq 2$ be a random spiro chain and $a \in \mathbb{R}$. Then
\begin{center}
   $T I(G)=\displaystyle\sum_{v \in V(G)} h\left(d_{v}\right)^{a}=2h(2)^{a}-h(4)^{a}+(4h(2)^{a}+h(4)^{a})n$,
\end{center}

\noindent
and

\begin{center}
$T I(G)=\displaystyle\sum_{v u \in E(G)} f\left(d_{v}, d_{u}\right)^{a}=A+BX+Cn$,
 \end{center}

\begin{center}
$\mathbb{E}(TI_{n})=A+(Bp_{1}+C)n-2Bp_{1}$,
 \end{center} 
 \begin{center}
$V(TI_{n})=B^{2}p_{1}(1-p_{1})(n-2)$,
 \end{center}

\noindent
where $A=4f\left(2, 2\right)^{a}-4f\left(2, 4\right)^{a}$, $B=f\left(2, 2\right)^{a}-2f\left(2, 4\right)^{a}+f\left(4, 4\right)^{a}$, $C=2f\left(2, 2\right)^{a}+4f\left(2, 4\right)^{a}$ and $X$ has a binomial distribution with parameters $n-2$ and $p_{1}$.
\end{theorem}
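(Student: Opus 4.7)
The plan is to specialize the general results of Section~\ref{sect} to the concrete geometry of a random spiro chain, in which every vertex has degree $2$ (an ordinary hexagon vertex) or $4$ (a spiro junction). For the vertex-based index, I would first observe that $RSC_{n}$ contains $n-1$ spiro junctions (each shared between two consecutive hexagons) and $4n+2$ remaining vertices of degree $2$, since each of the $n$ hexagons contributes six vertices of which $n-1$ are pairwise identified. Substituting these counts into $\sum_{v}h(d_{v})^{a}$ immediately yields the closed form $2h(2)^{a}-h(4)^{a}+(4h(2)^{a}+h(4)^{a})n$.

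For the edge-based index the first task is to evaluate $TI_{2}$ together with the three increments $\alpha_{i}=TI_{3,i}-TI_{2}$. A direct inspection of $RSC_{2}$ shows it has $8$ edges of type $(2,2)$, $4$ edges of type $(2,4)$ and none of type $(4,4)$, so $TI_{2}=8f(2,2)^{a}+4f(2,4)^{a}$. For each linkage $i$, attaching a new hexagon (i) promotes exactly one old vertex from degree $2$ to degree $4$, reassigning the types of the two old edges incident to it, and (ii) contributes six new edges, of which exactly the two incident to the new spiro vertex are of type $(2,4)$ while the other four are of type $(2,2)$. The linkages differ only in whether the new spiro vertex is adjacent, inside the shared hexagon, to the previous spiro vertex: in that \emph{ortho} case one of the reassigned old edges becomes of type $(4,4)$ instead of $(2,4)$. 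Labelling this ortho case as $i=1$ and the two remaining positions (\emph{meta} and \emph{para}) as $i=2,3$, a short tally yields $\alpha_{1}=3f(2,2)^{a}+2f(2,4)^{a}+f(4,4)^{a}$ and $\alpha_{2}=\alpha_{3}=2f(2,2)^{a}+4f(2,4)^{a}=C$, so that $\alpha_{1}-\alpha_{2}=B$.

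The identity $\alpha_{2}=\alpha_{3}$ lets the multinomial representation of Theorem~\ref{t3} collapse into a binomial one. Writing $(X_{1},X_{2},X_{3})$ for the multinomial counts of the three linkage types among the $n-2$ insertions and setting $X:=X_{1}\sim\mathrm{Bin}(n-2,p_{1})$, one obtains
$$TI_{n}=TI_{2}+\alpha_{1}X_{1}+\alpha_{2}(X_{2}+X_{3})=TI_{2}+BX+(n-2)C,$$
and using $TI_{2}-2C=A$ this rewrites as $A+BX+Cn$. The mean and variance expressions then follow from $\mathbb{E}(X)=(n-2)p_{1}$ and $V(X)=(n-2)p_{1}(1-p_{1})$, or equivalently by substituting the computed $\alpha_{i}$ and $\beta=\alpha_{1}^{2}p_{1}+C^{2}(1-p_{1})$ into the formulas of Theorem~\ref{t1}.

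The main obstacle is the case analysis for the three linkage types: one has to track carefully the two old edges whose endpoint degrees shift when a vertex is promoted to a spiro junction, and in particular verify that the meta and para positions produce \emph{exactly} the same increment, so that the trinomial really reduces to a binomial in $X_{1}$ alone. Once this symmetry is established the rest of the argument is pure algebraic bookkeeping, and the stated expected value and variance drop out of Theorem~\ref{t1} without further effort.
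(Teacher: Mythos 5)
Your proposal is correct and follows essentially the same route as the paper: compute $TI_{2}$ and the three increments $\alpha_{i}$ from the local change caused by attaching a hexagon, observe that two of the increments coincide so the multinomial representation of Theorem~\ref{t3} collapses to a binomial, and read off the mean and variance from Theorem~\ref{t1}. One small remark: you label the attachment that creates a $(4,4)$ edge as $i=1$, which is the convention that makes the theorem statement and the corollaries (all written in terms of $p_{1}$) internally consistent, whereas the paper's own proof assigns that case to $i=3$; your bookkeeping is the one that actually matches the stated formulas.
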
 

\begin{proof}
 As can be seen from the results obtained in Section \ref{sect}, we need to find $TI_{2}$ and $\alpha_{i}$ with $i=1,2,3$. By the definition of $TI_{n}$ in Equation (\ref{sa1}), $RSC_{2}$ and $RSC_{3}^{i}$, we have that,

\begin{center}
$TI_{2}=10h(2)^{a}+h(4)^{a}$,
\end{center}

\begin{center}
$T_{3,1}=T_{3,2}=T_{3,3}=14h(2)^{a}+2h(4)^{a}$.
\end{center}
\noindent
Then,
\begin{center}
$\alpha_{1}=\alpha_{2}=\alpha_{3}=4h(2)^{a}+h(4)^{a}$.
\end{center}
\noindent
On the other hand, by the definition of $TI_{n}$ in Equation (\ref{sa2}), it follows that,

\begin{center}
$TI_{2}=8f(2,2)^{a}+4f(2,4)^{a}$,
\end{center}

\begin{center}
$T_{3,1}=T_{3,2}=10f(2,2)^{a}+8f(2,4)^{a}$,
\end{center}

\begin{center}
$T_{3,3}=11f(2,2)^{a}+6f(2,4)^{a}+f(4,4)^{a}$.
\end{center}

\noindent
Then,
\begin{center}
$\alpha_{1}=\alpha_{2}=2f(2,2)^{a}+4f(2,4)^{a}$,
\end{center}
\begin{center}
$\alpha_{3}=3f(2,2)^{a}+2f(2,4)^{a}+f(4,4)^{a}$.
\end{center}
\noindent
In each case, applying Theorem \ref{t1}, \ref{t2} and \ref{t3}, we verify the results.
\end{proof}
\begin{remark}
Note that if $TI(G)= \displaystyle\sum_{v\in \in V(G)} h(d_{v})^{a}$ then $TI_{n}$ is a deterministic sequence and if $TI(G)= \displaystyle\sum_{vu\in E(G)} f(d_{v},d_{u})^{a}$, we have that $TI_{n}=TI_{2}+\alpha_{1}(n-2)$  with $n\geq 2$ (a deterministic sequence) if and only if $f(2,2)^{a}+f(4,4)^{a}=2f(2,4)^{a}$. In particular, if $a=1$ then taking $f(x,y)=x^{\theta}+y^{\theta}$ with $\theta \in \mathbb{R}$, it is verified that $f(2,2)+f(4,4)=2f(2,4)$; which make sense, since $TI(G)= \displaystyle\sum_{vu\in E(G)} d_{u}^{\theta}+d_{v}^{\theta}=\displaystyle\sum_{v\in E(G)} d_{v}^{\theta+1} $
\end{remark}

\noindent
Now, in order to apply Theorem \ref{galle}, we present the following corollaries.

\begin{corollary}\label{c2}
Let $RSC_{n}=RSC\left(n, p_{1}, p_{2}, p_{3}\right)$ be a random spiro chain and $N_{n}$ be the Nirmala index of a $RSC_{n}$, with $n \geq 2$. Then

\begin{equation*}
    N_{n}=8-4 \sqrt{6}+(2-2 \sqrt{6}+2\sqrt{2}) X+(4+4 \sqrt{6})n,
\end{equation*}

\begin{equation*}
    \mathbb{E}\left(N_{n}\right)=8-4 \sqrt{6}+\left((2-2 \sqrt{6}+2\sqrt{2}) p_{1}+4+4 \sqrt{6}\right)n-2(2-2 \sqrt{6}+2\sqrt{2})p_{1},
\end{equation*}

\begin{equation*}
     V\left(N_{n}\right)=(2-2 \sqrt{6}+2\sqrt{2})^{2} p_{1}\left(1-p_{1}\right)(n-2),
\end{equation*}

\begin{equation*}
    \frac{N_{n}-\mathbb{E}\left(N_{n}\right)}{\sqrt{V\left(N_{n}\right)}} \xrightarrow{D} N(0,1),
\end{equation*}
\hfill
\medskip

\noindent
where $X$ has a binomial distribution with parameters $n-2$ and $p_{1}$.

\end{corollary}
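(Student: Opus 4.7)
The plan is to recognize the Nirmala index as an instance of the edge-based topological index defined in Equation (\ref{sa2}), so that Corollary \ref{c2} becomes a direct specialization of Theorem \ref{galle} combined with Corollary \ref{c0}. By definition (\ref{nir}), $N(G)=\sum_{uv\in E(G)}\sqrt{d_u+d_v}$, which exactly fits the template $\sum_{vu\in E(G)} f(d_v,d_u)^a$ with the symmetric function $f(x,y)=x+y$ and exponent $a=1/2$.

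First I would compute the three elementary quantities that feed into Theorem \ref{galle}, namely $f(2,2)^{1/2}=2$, $f(2,4)^{1/2}=\sqrt{6}$ and $f(4,4)^{1/2}=2\sqrt{2}$. Plugging these into the definitions of $A$, $B$, $C$ given in Theorem \ref{galle} yields $A=8-4\sqrt{6}$, $B=2-2\sqrt{6}+2\sqrt{2}$ and $C=4+4\sqrt{6}$, which match the constants appearing in the statement.

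Next I would substitute these values directly into the identities provided by Theorem \ref{galle}. The identity $TI_n=A+BX+Cn$ immediately yields the stated expression for $N_n$, with $X$ inheriting the binomial $(n-2,p_1)$ distribution. The identities for $\mathbb{E}(TI_n)$ and $V(TI_n)$ then give the claimed formulas for $\mathbb{E}(N_n)$ and $V(N_n)$ with no additional work. Finally, the asymptotic normality $(N_n-\mathbb{E}(N_n))/\sqrt{V(N_n)}\xrightarrow{D}N(0,1)$ follows at once from Corollary \ref{c0} applied to $TI_n=N_n$.

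There is really no substantive obstacle here: the entire argument is a mechanical specialization of the machinery developed in Section \ref{sect}. The only point worth flagging is the non-degeneracy needed to invoke Corollary \ref{c0}, i.e., $B\neq 0$. Since $2\sqrt{6}\approx 4.899$ while $2+2\sqrt{2}\approx 4.828$, one has $B\neq 0$, and combined with $p_1\in(0,1)$ this guarantees $V(N_n)>0$ so that the central limit theorem applies non-trivially.
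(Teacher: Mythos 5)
Your proposal is correct and follows exactly the route the paper intends: the corollary is stated as an immediate specialization of Theorem \ref{galle} (with $f(x,y)=x+y$, $a=1/2$, giving $f(2,2)^{1/2}=2$, $f(2,4)^{1/2}=\sqrt{6}$, $f(4,4)^{1/2}=2\sqrt{2}$ and hence the stated $A$, $B$, $C$) together with Corollary \ref{c0} for the normal limit. Your explicit check that $B=2-2\sqrt{6}+2\sqrt{2}\neq 0$, so that the variance is strictly positive and the limit is non-degenerate, is a small but worthwhile addition the paper leaves implicit.
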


\begin{corollary}\label{c6}
Let $RSC_{n}=RSC\left(n, p_{1}, p_{2}, p_{3}\right)$ be a random spiro chain and $M1_{n}$ be the first Zagreb index of a $RSC_{n}$, with $n \geq 2$. Then

\begin{equation}\label{zagr}
    M1_{n}=32n-8.
\end{equation}

\end{corollary}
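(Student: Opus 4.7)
The plan is to recognize that the first Zagreb index is a special case of the vertex-based topological index family (\ref{sa1}) and then invoke the deterministic formula already proven in Theorem \ref{galle}. Concretely, the first Zagreb index corresponds to the choice $h(t)=t$ and $a=2$, as noted in the bullet list following Equation (\ref{sa2}). Since this places $M1_{n}$ in the class handled by the first formula of Theorem \ref{galle} (rather than the second, edge-based one), no randomness enters and the identity holds pointwise rather than in expectation.

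First I would restate the conclusion of Theorem \ref{galle} in the vertex-index case: for any $a \in \mathbb{R}$,
\begin{equation*}
TI(RSC_{n}) = 2h(2)^{a} - h(4)^{a} + \bigl(4h(2)^{a}+h(4)^{a}\bigr)n.
\end{equation*}
Next I would substitute $h(t)=t$ and $a=2$, noting that $h(2)^{2}=4$ and $h(4)^{2}=16$. Plugging in gives
\begin{equation*}
M1_{n} = 2\cdot 4 - 16 + (4\cdot 4 + 16)n = -8 + 32n,
\end{equation*}
which is exactly (\ref{zagr}).

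There is no real obstacle here beyond the arithmetic substitution; the heavy lifting has been done by Theorem \ref{galle}, whose derivation in turn relied on computing $TI_{2}=10h(2)^{a}+h(4)^{a}$ and $\alpha_{1}=\alpha_{2}=\alpha_{3}=4h(2)^{a}+h(4)^{a}$ for the spiro structure. The only thing worth flagging explicitly in the write-up is the remark that because $\alpha_{1}=\alpha_{2}=\alpha_{3}$ in the vertex-index case, Theorem \ref{t1} (combined with the observation immediately following its proof) forces $\beta-\alpha^{2}=0$, so $V(M1_{n})=0$ and the formula holds almost surely as a deterministic sequence rather than merely in distribution. This justifies writing $M1_{n}=32n-8$ without any expectation on the left-hand side.
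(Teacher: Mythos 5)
Your proposal is correct and follows exactly the route the paper intends: the corollary is presented as a direct application of the vertex-index formula in Theorem \ref{galle} with $h(t)=t$ and $a=2$, and your substitution $2\cdot 4-16+(4\cdot 4+16)n=32n-8$ matches. Your added remark that $\alpha_{1}=\alpha_{2}=\alpha_{3}$ forces the sequence to be deterministic is consistent with the paper's own observation following Theorem \ref{t1} and its remark after Theorem \ref{galle}.
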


\begin{corollary}\label{c3}
Let $RSC_{n}=RSC\left(n, p_{1}, p_{2}, p_{3}\right)$ be a random spiro chain and $R_{n}$ be the Randić index of a $RSC_{n}$, with $n \geq 2$. Then

\begin{equation*}
    R_{n}=2-\sqrt{2}+(3/4-\sqrt{2}/2) X+(1+\sqrt{2})n,
\end{equation*}

\begin{equation}\label{rand}
    \mathbb{E}\left(R_{n}\right)=2-\sqrt{2}+\left((3/4-\sqrt{2}/2) p_{1}+1+\sqrt{2}\right)n+(\sqrt{2}-3/2)p_{1},
\end{equation}

\begin{equation*}
     V\left(R_{n}\right)=(3/4-\sqrt{2}/2)^{2} p_{1}\left(1-p_{1}\right)(n-2),
\end{equation*}

\begin{equation*}
    \frac{R_{n}-\mathbb{E}\left(R_{n}\right)}{\sqrt{V\left(R_{n}\right)}} \xrightarrow{D} N(0,1),
\end{equation*}
\hfill
\medskip

\noindent
where $X$ has a binomial distribution with parameters $n-2$ and $p_{1}$.

\end{corollary}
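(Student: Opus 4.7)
The plan is to apply Theorem~\ref{galle} directly to the edge-based formula for the Randi\'c index, which corresponds to the choices $f(x,y)=xy$ and $a=-1/2$, so that $R(G)=\sum_{uv\in E(G)}(d_u d_v)^{-1/2}$. Since every vertex in $RSC_n$ has degree $2$ or $4$, every edge contributes via one of the three weights $f(2,2)^{-1/2}$, $f(2,4)^{-1/2}$, or $f(4,4)^{-1/2}$, which I would first evaluate as $1/2$, $1/(2\sqrt{2})$, and $1/4$ respectively.

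Next I would substitute these three numbers into the constants $A=4f(2,2)^{a}-4f(2,4)^{a}$, $B=f(2,2)^{a}-2f(2,4)^{a}+f(4,4)^{a}$, and $C=2f(2,2)^{a}+4f(2,4)^{a}$ defined in Theorem~\ref{galle}. A short arithmetic computation should give $A=2-\sqrt{2}$, $B=3/4-\sqrt{2}/2$, and $C=1+\sqrt{2}$. Plugging these into the general representation $TI_n=A+BX+Cn$ of Theorem~\ref{galle} (with $X$ binomial of parameters $n-2$ and $p_1$) yields the stated formula for $R_n$, and plugging them into the formulas $\mathbb{E}(TI_n)=A+(Bp_1+C)n-2Bp_1$ and $V(TI_n)=B^2p_1(1-p_1)(n-2)$ yields the announced expressions for $\mathbb{E}(R_n)$ and $V(R_n)$. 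The asymptotic normality statement is then inherited directly from Corollary~\ref{c0} applied to $TI_n=R_n$.

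The only point that needs a moment of care is the application of Corollary~\ref{c0}, which requires $V(R_n)>0$, equivalently $B\neq 0$. Since $B=3/4-\sqrt{2}/2\approx 0.043>0$, this condition is clearly satisfied, so the standard-normal limit is nontrivial. Beyond this observation there is no genuine obstacle: the result is a mechanical specialization of Theorem~\ref{galle}, and the merit of the statement lies in the explicit closed-form constants rather than in any new argument.
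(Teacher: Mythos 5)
Your proposal is correct and follows exactly the route the paper intends: Corollary~\ref{c3} is a mechanical specialization of Theorem~\ref{galle} with $f(x,y)=xy$ and $a=-1/2$, and your computed values of $A$, $B$, $C$ and the nondegeneracy check $B=3/4-\sqrt{2}/2\neq 0$ match the paper's formulas. No further comment is needed.
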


\begin{corollary}\label{c4}
Let $RSC_{n}=RSC\left(n, p_{1}, p_{2}, p_{3}\right)$ be a random spiro chain and $S_{n}$ be the Sombor index of a $RSC_{n}$, with $n \geq 2$. Then

\begin{equation*}
    S_{n}=8\sqrt{2}-8\sqrt{5}+(6\sqrt{2}-4\sqrt{5}) X+(4\sqrt{2}+8\sqrt{5})n,
\end{equation*}

\begin{equation*}
    \mathbb{E}\left(S_{n}\right)=8\sqrt{2}-8\sqrt{5}+\left((6\sqrt{2}-4\sqrt{5}) p_{1}+4\sqrt{2}+8\sqrt{5}\right)n-2(6\sqrt{2}-4\sqrt{5})p_{1},
\end{equation*}

\begin{equation*}
     V\left(S_{n}\right)=(6\sqrt{2}-4\sqrt{5})^{2} p_{1}\left(1-p_{1}\right)(n-2),
\end{equation*}

\begin{equation*}
    \frac{S_{n}-\mathbb{E}\left(S_{n}\right)}{\sqrt{V\left(S_{n}\right)}} \xrightarrow{D} N(0,1),
\end{equation*}
\hfill
\medskip

\noindent
where $X$ has a binomial distribution with parameters $n-2$ and $p_{1}$.

\end{corollary}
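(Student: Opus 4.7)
The plan is to apply Theorem \ref{galle} directly to the Sombor index, which fits the template of Equation (\ref{sa2}) with $f(x,y)=\sqrt{x^{2}+y^{2}}$ and $a=1$. Since Theorem \ref{galle} already packages the expected value, variance, exact distributional representation, and asymptotic normality in terms of the three constants $A$, $B$, $C$, the entire proof reduces to evaluating these constants at the chosen $f$.

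First, I would compute $f$ at the three relevant degree pairs that arise in a spiro chain (namely $(2,2)$, $(2,4)$, and $(4,4)$): $f(2,2)=\sqrt{8}=2\sqrt{2}$, $f(2,4)=\sqrt{20}=2\sqrt{5}$, and $f(4,4)=\sqrt{32}=4\sqrt{2}$. Substituting into the definitions
\begin{equation*}
A=4f(2,2)-4f(2,4),\qquad B=f(2,2)-2f(2,4)+f(4,4),\qquad C=2f(2,2)+4f(2,4),
\end{equation*}
gives $A=8\sqrt{2}-8\sqrt{5}$, $B=6\sqrt{2}-4\sqrt{5}$, and $C=4\sqrt{2}+8\sqrt{5}$, which are exactly the coefficients appearing in the statement. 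Plugging these into the three formulas of Theorem \ref{galle} produces the stated expressions for $S_{n}$, $\mathbb{E}(S_{n})$, and $V(S_{n})$ with $X\sim\mathrm{Bin}(n-2,p_{1})$.

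For the asymptotic normality, I would invoke Corollary \ref{c0}, which already delivers convergence to $N(0,1)$ of the standardized statistic $(TI_{n}-\mathbb{E}(TI_{n}))/\sqrt{V(TI_{n})}$ whenever $\beta-\alpha^{2}>0$. The remark following Theorem \ref{t1} reduces this non-degeneracy to checking that the $\alpha_{i}$ are not all equal; here $\alpha_{1}=\alpha_{2}=2f(2,2)+4f(2,4)=4\sqrt{2}+8\sqrt{5}$ while $\alpha_{3}=3f(2,2)+2f(2,4)+f(4,4)=10\sqrt{2}+4\sqrt{5}$, and these differ, so the variance is strictly positive. Equivalently, one can note $B=6\sqrt{2}-4\sqrt{5}\neq 0$, making $V(S_{n})>0$ for $p_{1}\in(0,1)$, so the ratio in the corollary is well-defined.

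There is essentially no conceptual obstacle here: everything is a numerical substitution into a result that has already been proved in full generality. The only place one must be careful is in keeping the surds consistent (writing $\sqrt{8}=2\sqrt{2}$, $\sqrt{20}=2\sqrt{5}$, $\sqrt{32}=4\sqrt{2}$ rather than leaving them unsimplified) so that the constants $A$, $B$, $C$ match the form stated in the corollary. Once those simplifications are made, the proof is completed by a single appeal to Theorem \ref{galle} together with Corollary \ref{c0}.
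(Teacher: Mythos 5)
Your proposal is correct and follows exactly the route the paper intends: the corollary is presented as a direct application of Theorem \ref{galle} with $f(x,y)=\sqrt{x^{2}+y^{2}}$ and $a=1$, and your computed constants $A=8\sqrt{2}-8\sqrt{5}$, $B=6\sqrt{2}-4\sqrt{5}$, $C=4\sqrt{2}+8\sqrt{5}$ match the stated coefficients. Your extra check that $\alpha_{3}\neq\alpha_{1}$ (equivalently $B\neq 0$), ensuring the standardization is well-defined, is a welcome addition the paper leaves implicit.
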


\begin{corollary}\label{c5}
Let $RSC_{n}=RSC\left(n, p_{1}, p_{2}, p_{3}\right)$ be a random spiro chain and $M2_{n}$ be the second Zagreb index of a $RSC_{n}$, with $n \geq 2$. Then

\begin{equation*}
    M2_{n}=4X+40n-16,
\end{equation*}

\begin{equation}\label{zag}
    \mathbb{E}\left(M2_{n}\right)=\left(4p_{1}+40\right)n-8p_{1}-16,
\end{equation}

\begin{equation*}
     V\left(M2_{n}\right)= 16p_{1}\left(1-p_{1}\right)(n-2),
\end{equation*}

\begin{equation*}
    \frac{M2_{n}-\mathbb{E}\left(M2_{n}\right)}{\sqrt{V\left(M2_{n}\right)}} \xrightarrow{D} N(0,1),
\end{equation*}
\hfill
\medskip

\noindent
where $X$ has a binomial distribution with parameters $n-2$ and $p_{1}$.

\end{corollary}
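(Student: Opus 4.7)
The plan is to invoke Theorem \ref{galle} directly, after observing that the second Zagreb index is an edge-based topological index of the form in Equation (\ref{sa2}) with $f(x,y)=xy$ and $a=1$. Once this identification is made, the four claims of the corollary reduce to substitution into the general formulas already established, so no further probabilistic argument is needed.

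First I would record the three values of $f$ that appear in Theorem \ref{galle}:
\begin{align*}
f(2,2) &= 4, & f(2,4) &= 8, & f(4,4) &= 16.
\end{align*}
These are the only degree pairs that can occur, since every vertex of $RSC_n$ has degree $2$ or $4$, with the spiro atoms being precisely the degree-$4$ vertices. Substituting into the definitions of $A$, $B$, $C$ in Theorem \ref{galle} I would get
\begin{align*}
A &= 4f(2,2)-4f(2,4) = -16,\\
B &= f(2,2)-2f(2,4)+f(4,4) = 4,\\
C &= 2f(2,2)+4f(2,4) = 40.
\end{align*}

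Plugging these three numbers into the exact representation $M2_n=A+BX+Cn$ yields $M2_n = -16+4X+40n$; the formulas for $\mathbb{E}(M2_n)$ and $V(M2_n)$ follow from the corresponding expressions in Theorem \ref{galle} by the same substitution, and the asymptotic normality of $(M2_n-\mathbb{E}(M2_n))/\sqrt{V(M2_n)}$ is the direct instantiation of the limit theorem in Theorem \ref{galle} (which itself rests on Corollary \ref{c0} and Theorem \ref{t2}).

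There is no real obstacle: this corollary is a routine specialization of a general result that has already absorbed all the martingale and Lindeberg-type work. The only points requiring care are keeping the arithmetic straight and confirming that the spiro structure produces only vertices of degree $2$ or $4$, so that the three evaluations of $f$ above are indeed exhaustive and the constants $A,B,C$ are correctly computed.
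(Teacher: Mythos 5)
Your proposal is correct and matches the paper's intended route: the corollaries in this section are presented precisely as instantiations of Theorem \ref{galle}, and your substitution $f(x,y)=xy$, $a=1$, giving $A=-16$, $B=4$, $C=40$, reproduces all four claims exactly. The arithmetic checks out and the appeal to Corollary \ref{c0} for the normal limit is the same mechanism the paper relies on.
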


\begin{remark}
In fact, we can see that (\ref{zagr}) and (\ref{rand}) are obtained in
\cite{n6}. Also, we can see that  Corollary \ref{c4} and (\ref{zag}) are obtained in \cite{n9} and \cite{n7}, respectively.
\end{remark}

\begin{remark}
For $n\geq 2$ and $p_{1}\in(0,1)$, it follows from Corollary \ref{c2}, \ref{c6}, \ref{c3}, \ref{c4} and \ref{c5} that

\hfill

\begin{center}
    $\mathbb{E}(R_{n}) \leq \mathbb{E}(N_{n})\leq \mathbb{E}(S_{n}) \leq \mathbb{E}(M1_{n})\leq \mathbb{E}(M2_{n})$ (see Figure \ref{fl}),
    
\end{center}

\medskip

\begin{center}
    $V(R_{n}) \leq V(N_{n})\leq V(S_{n}) \leq  V(M2_{n})$.
\end{center}
\end{remark}

\begin{figure}[h!]

   \centering
    \includegraphics[width=0.5\textwidth]{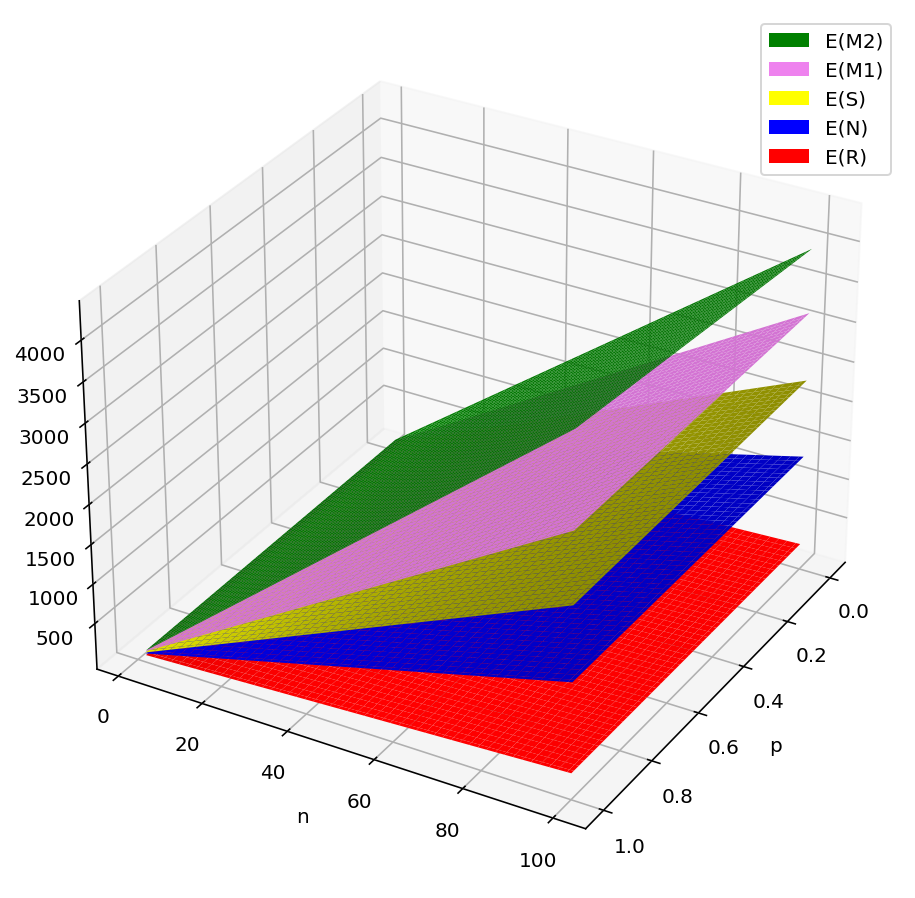}
    \caption{ Difference between $\mathbb{E}(R_{n}), \mathbb{E}(N_{n}), \mathbb{E}(S_{n}), \mathbb{E}(M1_{n})$ and $\mathbb{E}(M2_{n})$.} 
    \label{fl}
\end{figure}

\noindent
Finally, we conduct a numerical experiment to support the asymptotic behaviors developed in Corollaries \ref{c2}, \ref{c3}, \ref{c4} and \ref{c5}. Given a fixed $p_{1} \in (0,1)$, in each case, we independently generate  $5,000$ replications of a random spiro
chain after $n = 10,000$ evolutionary steps. For each simulated random spiro chain, its topological index is computed. The histogram of the sample data with a normal approximation curve are given in Figure \ref{f9},~\ref{f10},~\ref{f11} and \ref{f12}.

\begin{figure}[h!]

   \centering
    \includegraphics[width=0.5\textwidth]{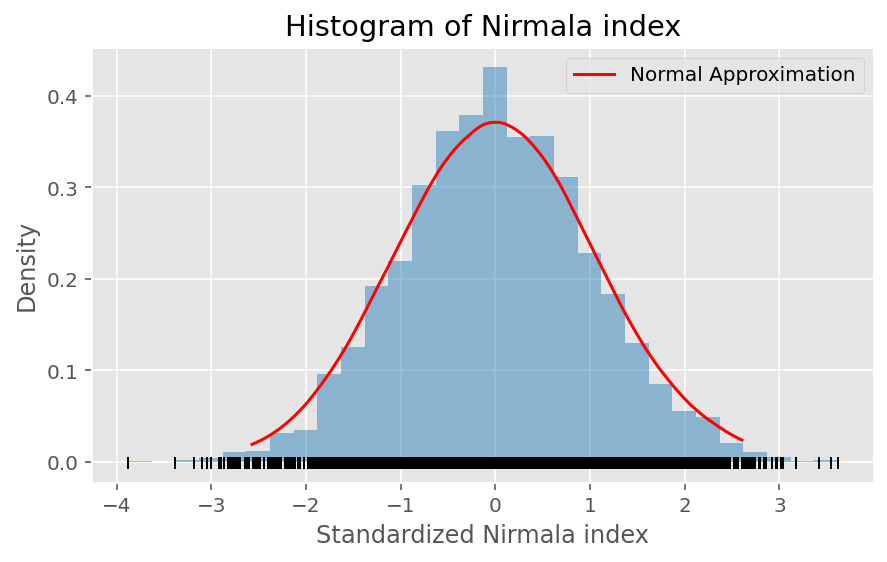}
    \caption{ Histogram of the standardized Nirmala index of $5,000$ independently
generated random spiro chains with $n = 10,000$; the thick red curve
is the estimated density of the sample.} 
    \label{f9}
\end{figure}

\begin{figure}[h!]

   \centering
    \includegraphics[width=0.5\textwidth]{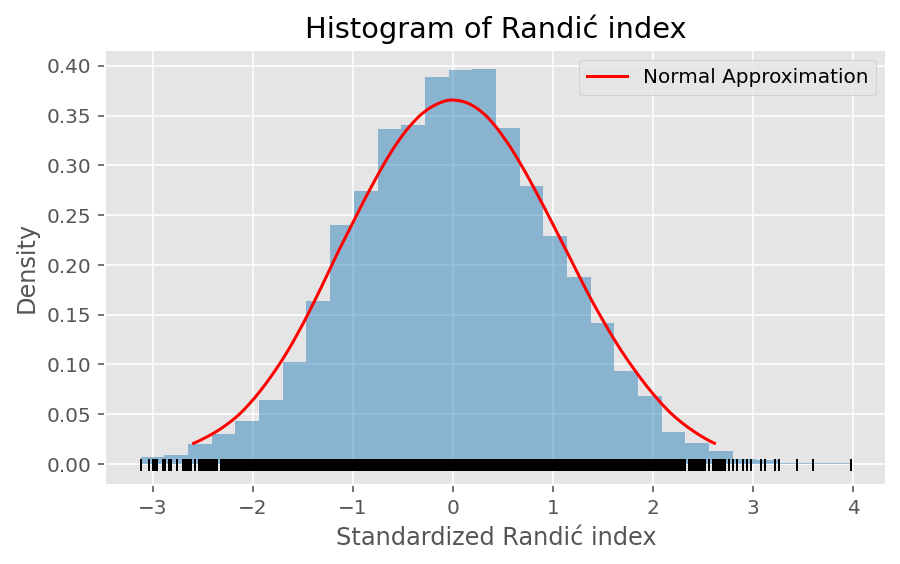}
    \caption{ Histogram of the standardized Randić index of $5,000$ independently
generated random spiro chains with $n = 10,000$; the thick red curve
is the estimated density of the sample.} 
    \label{f10}
\end{figure}

\newpage

\begin{figure}[h!]

   \centering
    \includegraphics[width=0.5\textwidth]{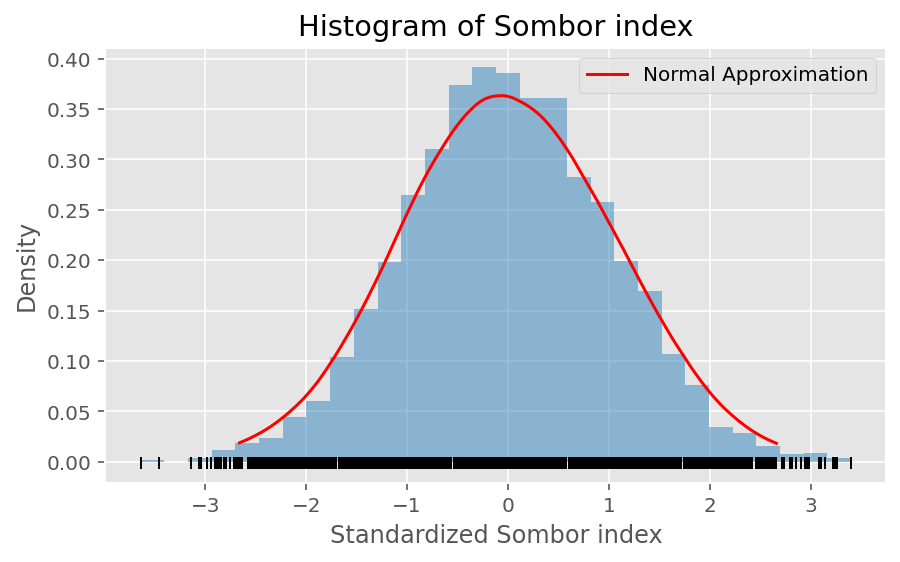}
    \caption{ Histogram of the standardized Sombor index of $5,000$ independently
generated random spiro chains with $n = 10,000$; the thick red curve
is the estimated density of the sample.} 
    \label{f11}
\end{figure}

\begin{figure}[h!]

   \centering
    \includegraphics[width=0.5\textwidth]{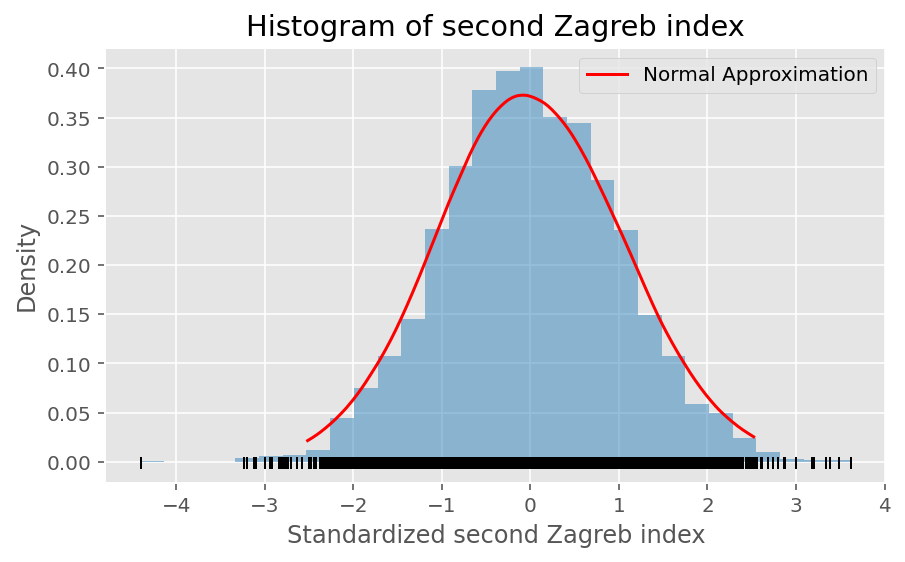}
    \caption{ Histogram of the standardized second Zagreb index of $5,000$ independently
generated random spiro chains with $n = 10,000$; the thick red curve
is the estimated density of the sample.} 
    \label{f12}
\end{figure}

\section{Concluding Remarks}
In this paper, we propose a martingale
approach to the study of topological indices in random spiro chains. The expected value, variance, exact distribution have been determined. Also, we formulate a martingale to characterize the asymptotic behavior of the topological indices. We show that the same analysis works here if we simply use a martingale central limit theorem instead of a classical central limit theorem. Moreover, we consider some particular topological indices, such as, Nirmala, Sombor, Randić and Zagreb index, in other words, we exploit the martingale approach.

\bibliographystyle{chicago}
\bibliography{Biblioteca}

\end{document}